\numberwithin{equation}{section}
\numberwithin{figure}{section}
\theoremstyle{plain}
\newtheorem{thm}{\protect\theoremname}[section]
\theoremstyle{plain}
\newtheorem{question}[thm]{\protect\questionname}
\theoremstyle{remark}
\newtheorem{rem}[thm]{\protect\remarkname}
\theoremstyle{plain}
\newtheorem{lem}[thm]{\protect\lemmaname}
\theoremstyle{plain}
\newtheorem{cor}[thm]{\protect\corollaryname}
\providecommand*{\code}[1]{\texttt{#1}}
\DeclareRobustCommand*\cal{\@fontswitch\relax\mathcal}
\tikzset{curve/.style={settings={#1},to path={(\tikztostart)
    .. controls ($(\tikztostart)!\pv{pos}!(\tikztotarget)!\pv{height}!270:(\tikztotarget)$)
    and ($(\tikztostart)!1-\pv{pos}!(\tikztotarget)!\pv{height}!270:(\tikztotarget)$)
    .. (\tikztotarget)\tikztonodes}},
    settings/.code={\tikzset{quiver/.cd,#1}
        \def\pv##1{\pgfkeysvalueof{/tikz/quiver/##1}}},
    quiver/.cd,pos/.initial=0.35,height/.initial=0}
\tikzset{tail reversed/.code={\pgfsetarrowsstart{tikzcd to}}}
\tikzset{2tail/.code={\pgfsetarrowsstart{Implies[reversed]}}}
\tikzset{2tail reversed/.code={\pgfsetarrowsstart{Implies}}}
\tikzset{no body/.style={/tikz/dash pattern=on 0 off 1mm}}
\providecommand{\corollaryname}{Corollary}
\providecommand{\lemmaname}{Lemma}
\providecommand{\questionname}{Question}
\providecommand{\remarkname}{Remark}
\providecommand{\theoremname}{Theorem}
\begin{document}
\title{Khovanov homology can distinguish exotic Mazur manifolds}
\author{Gheehyun Nahm}
\thanks{The author was partially supported by the ILJU Academy and Culture
Foundation, the Simons collaboration \emph{New structures in low-dimensional
topology}, and a Princeton Centennial Fellowship.}
\address{Department of Mathematics, Princeton University, Princeton, New Jersey
08544, USA}
\email{gn4470@math.princeton.edu}
\begin{abstract}
In a recent breakthrough, Ren and Willis gave the first analysis-free
proof of the existence of exotic compact, orientable $4$-manifolds;
their main tool is the Khovanov skein lasagna module defined by Morrison,
Walker, and Wedrich. In this paper, we introduce a new, simple way
of using Khovanov homology to distinguish certain exotic compact,
orientable $4$-manifolds; our new method does not depend on the skein
lasagna module. As an application, we give the first analysis-free
proof of the existence of exotic Mazur manifolds, i.e.\ compact,
contractible $4$-manifolds that have handle decompositions with a
single $1$-handle and a single $2$-handle.
\end{abstract}

\maketitle

\section{Introduction}

In a recent breakthrough \cite{2402.10452}, Ren and Willis gave the
first analysis-free proof of the existence of exotic compact, orientable
$4$-manifolds; their main tool is the Khovanov skein lasagna module
defined by Morrison, Walker, and Wedrich \cite{MWW}. In this paper,
we first give a simple, skein lasagna-free proof (Corollary~\ref{cor:For-,-the})
of the existence of exotic compact, orientable $4$-manifolds. This
proof only uses that the Khovanov map for link cobordisms $S\subset[0,1]\times\mathbb{R}^{3}$
only depends, up to sign, on the isotopy class of $S$ rel $\partial$
\cite{MR1740682,Ja}, and Hayden and Sundberg's proof that this Khovanov
map distinguishes certain exotic disks in $D^{4}$ \cite{MR4726569}.

We then extend our method and give the first analysis-free proof (Theorem~\ref{thm:Khovanov-homology-can})
of the existence of exotic Mazur manifolds, i.e.\ compact, contractible
$4$-manifolds that have handle decompositions with a single $1$-handle
and a single $2$-handle. This proof is also skein lasagna-free and
uses little machinery; the key technical input is Ren's partial computation
of the Khovanov homology of torus links $T(n,n)$ and maps between
them \cite{MR4843752}. Note that the first pair of exotic compact,
contractible $4$-manifolds was obtained by Akbulut and Ruberman \cite{MR3471934},
and examples of exotic pairs of Mazur manifolds have previously been
discovered as well \cite{MR4053349,MR4317407}.

\begin{thm}
\label{thm:Khovanov-homology-can}For every integer $k\ge1$, Khovanov
homology can distinguish the exotic pair of Mazur manifolds of Figure~\ref{fig:mazur}.
\end{thm}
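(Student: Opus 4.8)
The plan is to transplant the argument behind Corollary~\ref{cor:For-,-the} into the contractible setting: from each of the two Mazur manifolds of Figure~\ref{fig:mazur} I would extract a knot $K\subset S^3$ together with a cobordism to the empty link, and then distinguish the manifolds smoothly by the maps these cobordisms induce on Khovanov homology, using Hayden and Sundberg's functoriality and computations \cite{MR4726569}. The homeomorphism half of ``exotic'' is cheap: both manifolds are contractible, and a contractible topological $4$-manifold is determined up to homeomorphism by its boundary homology sphere (Freedman), which is manifestly the same for the two pictures; so the whole content is smooth non-diffeomorphism.

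For the extraction I would use the handle structure. Removing an open neighbourhood of the cocore of the unique $2$-handle of $W_i$ gives back the standard $S^1\times D^3$, so $W_i$ is reconstructed by attaching a framed $2$-handle along a knot $\kappa_i\subset S^1\times S^2=\partial(S^1\times D^3)$ generating $\pi_1$. Cutting $S^1\times D^3$ along a properly embedded $3$-ball turns $\kappa_i$ into a framed tangle in $B^4$ with a fixed endpoint pattern on $S^3$ (the number of strands is controlled by the geometric intersection of $\kappa_i$ with the belt sphere of the $1$-handle, and this is where the family parameter $k$ enters), and, after a suitable closing-up that has to be pinned down, the $2$-handle core becomes a properly embedded surface $\Sigma_i$ in a homology $4$-ball $Z$ with $\partial Z=S^3$, $\partial\Sigma_i=K$, and with $K$ independent of $i$ by construction. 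This produces a cobordism map $\phi_{\Sigma_i}\colon\mathit{Kh}(\emptyset)\to\mathit{Kh}(K)$, defined up to sign.

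The next step is to argue that a diffeomorphism $W_1\to W_2$ forces $\phi_{\Sigma_1}$ and $\phi_{\Sigma_2}$ to agree up to sign and up to the action on $\mathit{Kh}(K)$ of a self-diffeomorphism of $(S^3,K)$; equivalently, that the Khovanov-valued data above descends to a genuine diffeomorphism invariant of $W_i$, unchanged under the handle slides, (de)stabilizations and isotopies relating the various $2$-handle presentations of $W_i$. Concretely, one would normalize an arbitrary diffeomorphism $W_1\to W_2$ so that it respects the complementary $S^1\times D^3$, hence the pair $(Z,\Sigma_i)$ and the knot $K$, using isotopy-uniqueness results in the spirit of Gabai's four-dimensional light bulb theorem together with the specific structure of $S^1\times D^3$ inside a Mazur manifold. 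Granting this, a Khovanov-homology computation finishes the proof: the expectation is that $\Sigma_1$ and $\Sigma_2$ come from Hayden and Sundberg's exotic slice disks by the same $k$-indexed twisting/cabling operation, so $\phi_{\Sigma_i}$ is obtained from their maps by a cabling formula, and one separates $\phi_{\Sigma_1}$ from $\phi_{\Sigma_2}$ by a quantity that is manifestly invariant under sign and under $\operatorname{Aut}(\mathit{Kh}(K))$ (for instance by postcomposing with a fixed further cobordism and comparing images), contradicting the previous paragraph.

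I expect the normalization step to be the main obstacle. An abstract diffeomorphism between the two Mazur manifolds need not interact at all with their handle decompositions; because $\partial W_i\neq S^3$ there is no ambient $3$-sphere in which to anchor $K$, and because $W_i$ is contractible there is no essential $2$-sphere to feed a textbook light-bulb argument. Producing a correctly-hypothesised isotopy-uniqueness statement that pins down the $2$-handle cocore (or the complementary $S^1\times D^3$) up to exactly the small, controlled indeterminacy that the Khovanov computation can absorb is the crux; by comparison the cabling computation, though laborious, should follow the template of \cite{MR4726569}.
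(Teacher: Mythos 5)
Your proposal correctly identifies the overall strategy (realize the Mazur manifolds as complements of surfaces built from the Hayden--Sundberg disks and distinguish them by Khovanov cobordism maps) and, to your credit, correctly isolates the crux; but the crux is left unresolved, and the route you sketch for it is not the one that works. The missing idea is a change of ambient manifold: the two Mazur manifolds of Figure~\ref{fig:mazur} are the exteriors of the disks $S_{k},S_{k}'\subset(\mathbb{CP}^{2})^{\circ}=D^{4}\#\mathbb{CP}^{2}$ obtained by \emph{blowing up} $m(\Sigma_{k}),m(\Sigma_{k}')$ at a point of the disk. This puts the boundary knot in an honest $S^{3}=\partial(\mathbb{CP}^{2})^{\circ}$ (resolving your worry that ``$\partial W_{i}\neq S^{3}$ gives nothing to anchor $K$''), and it replaces your normalization problem by two tractable steps: (i) a diffeomorphism of the exteriors extends, after the standard tubular-neighborhood argument, to a diffeomorphism of the pairs $((\mathbb{CP}^{2})^{\circ},S_{k})\to((\mathbb{CP}^{2})^{\circ},S_{k}')$, which can be taken rel $\partial$ because ${\rm MCG}$ of the boundary $3$-manifold $-S_{1}^{3}(J_{k})$ is trivial (a verified SnapPy computation, not something you address on the smooth side); and (ii) the Ren--Willis cobordism map $Kh_{\mathbb{CP}^{2}}(S)$ for surfaces in $(\mathbb{CP}^{2})^{\circ}$ is invariant under diffeomorphisms of the pair rel $\partial$ (Lemma~\ref{lem:tqft-diff}). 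Step (ii) is the real content: it is proved by a dilation trick that reduces an arbitrary diffeomorphism to a comparison of cobordisms between the torus links $T(n,n)_{p,q}$ near $\mathbb{CP}^{1}$, and then invokes Ren's structural results on $Kh^{0,*}$ of these links. No light-bulb-type uniqueness of the $2$-handle cocore is needed or available; as you yourself observe, there is no essential sphere to run such an argument, and an abstract diffeomorphism of $W_{1}\to W_{2}$ is never normalized to respect a handle decomposition.

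Two further points. First, your intermediate object is off: the ambient manifold containing the relevant surface is not a homology $4$-ball but $(\mathbb{CP}^{2})^{\circ}$, and the classical Khovanov cobordism map is not defined for surfaces there --- one genuinely needs the extension of \cite{2402.10452} (plus Remark~\ref{rem:blowdown}, which says that since $S_{k}$ meets $\mathbb{CP}^{1}$ once, $Kh_{\mathbb{CP}^{2}}(S_{k})=Kh(m(\Sigma_{k}))$, so the Hayden--Sundberg computation applies verbatim with no cabling formula). Second, your disposal of the homeomorphism half via Freedman's uniqueness of contractible fillings is a legitimate alternative to the paper's route (which instead quotes that the disk exteriors are homeomorphic rel $\partial$ because the disks are topologically isotopic), but it still requires verifying that the two boundaries are diffeomorphic, which is not quite ``manifest'' from Figure~\ref{fig:mazur} and in the paper follows from both being $-S_{1}^{3}(J_{k})$.
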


Theorem~\ref{thm:Khovanov-homology-can} is proved in Section~\ref{sec:Exotic-compact,-contractible}.
These $4$-manifolds are the exteriors of the disks in $\overline{\mathbb{CP}^{2}}\backslash{\rm int}D^{4}$
given by blowing up Hayden and Sundberg's \cite{MR4726569} exotic
asymmetric disks in $D^{4}$ (Figure~\ref{fig:asymdisk}) at a point
on the disks: see Figure~\ref{fig:handlecalculus} for alternative
handle diagrams of these manifolds. We thank Kyle Hayden for pointing
out that they are Mazur manifolds.

Ren and Willis \cite[Section 6.11]{2402.10452} defined the Khovanov
cobordism map for oriented surfaces in $\mathbb{CP}^{2}\backslash{\rm int}D^{4}$,
and showed, using skein lasagna modules, that it only depends on the
isotopy class of the surface rel $\partial$. A key ingredient of
our proof of Theorem~\ref{thm:Khovanov-homology-can} is Lemma~\ref{lem:tqft-diff},
which says that this cobordism map is invariant under diffeomorphisms
of $\mathbb{CP}^{2}\backslash{\rm int}D^{4}$ that fix the boundary
pointwise. Although we were motivated by skein lasagna modules, we
also present a proof in Subsection~\ref{subsec:A-direct-argument}
that avoids the theory of skein lasagna modules and instead uses \cite{MR4843752}
directly, with the aim of making the argument more transparent.

\begin{figure}[H]
\begin{centering}
\includegraphics[scale=0.7]{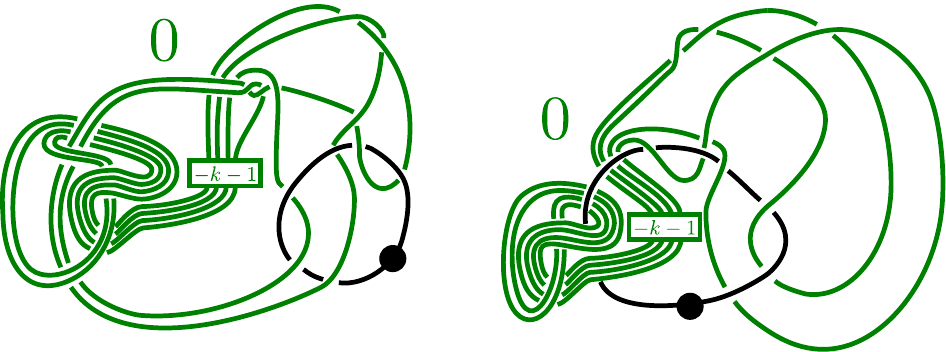}
\par\end{centering}
\caption{\label{fig:mazur}Exotic pairs of Mazur manifolds ($k\in\mathbb{Z}$,
$k\ge1$).}
\end{figure}

Theorem~\ref{thm:Khovanov-homology-can} still leaves the following
questions open.
\begin{question}
Can Khovanov homology distinguish exotic closed, oriented $4$-manifolds?
\end{question}

\begin{question}
Can Khovanov homology distinguish exotic closed, oriented, simply
connected $4$-manifolds?
\end{question}

\subsection*{Conventions}

Homeomorphisms, diffeomorphisms, and isotopies are \emph{rel $\partial$}
if they fix the boundary pointwise.

If $M$ is an oriented manifold, then $-M$ is $M$ equipped with
the opposite orientation.

The oriented boundary of the standard $4$-ball $D^{4}$ is $S^{3}$,
and if $\Sigma\subset D^{4}$ is a properly embedded surface with
boundary $L\subset S^{3}$, then the corresponding cobordism maps
on Khovanov homology are 
\[
Kh(\Sigma):Kh(\emptyset)\to Kh(L),\ Kh(m(L))\to Kh(\emptyset),
\]
where $m(L)$ is the \emph{mirror of $L$}. Similarly, if we denote
the \emph{mirror of $\Sigma$} as $m(\Sigma)$, then it induces
\[
Kh(m(\Sigma)):Kh(\emptyset)\to Kh(m(L)),\ Kh(L)\to Kh(\emptyset).
\]

\begin{rem}
\label{rem:Our-orientation-conventions,}Our orientation conventions,
in effect, are the same as \cite{MWW} (see \cite[Example 5.6]{MWW})
and \cite[Section 6.11]{2402.10452}, but are the opposite of \cite{MR4726569}.
\end{rem}

\subsection*{Acknowledgements}

We thank Peter Ozsv\'{a}th for his continuous support and helpful
discussions. The main observation of this work was made while rereading
the author's Part III essay during a visit to Duke University. We
thank Duke University for their hospitality and Jacob Rasmussen for
advising the author's Part III essay \emph{Khovanov Homology and Embedded
Surfaces}. We thank Nathan Dunfield for help with SnapPy, Kyle Hayden,
Adam Levine, Lisa Piccirillo, Misha Schmalian, Alison Tatsuoka, and
Qiuyu Ren for helpful discussions, Kyle Hayden and Robert Lipshitz
for helpful comments on earlier drafts, and Selman Akbulut for pointing
out a reference.

\section{\label{sec:Exotic-compact,-orientable}Exotic compact, orientable
$4$-manifolds from exotic disks}

In \cite[Example 3.6]{MR4726569}, Hayden and Sundberg define knots
$J_{k}\subset S^{3}$ for $k\in\mathbb{Z}$ and a pair of ribbon disks
$\Sigma_{k},\Sigma_{k}'\subset D^{4}$ that $J_{k}$ bounds (denoted
as $J_{m},\Sigma_{m},\Sigma_{m}'$ respectively in \cite{MR4726569}).
With respect to the radial height function of $D^{4}$, these ribbon
disks have two index $1$ critical points and three index $0$ critical
points. The knot $J_{k}$ and the two bands that correspond to the
index $1$ critical points are drawn in Figure~\ref{fig:asymdisk}.
\begin{figure}[H]
\begin{centering}
\includegraphics[scale=0.5]{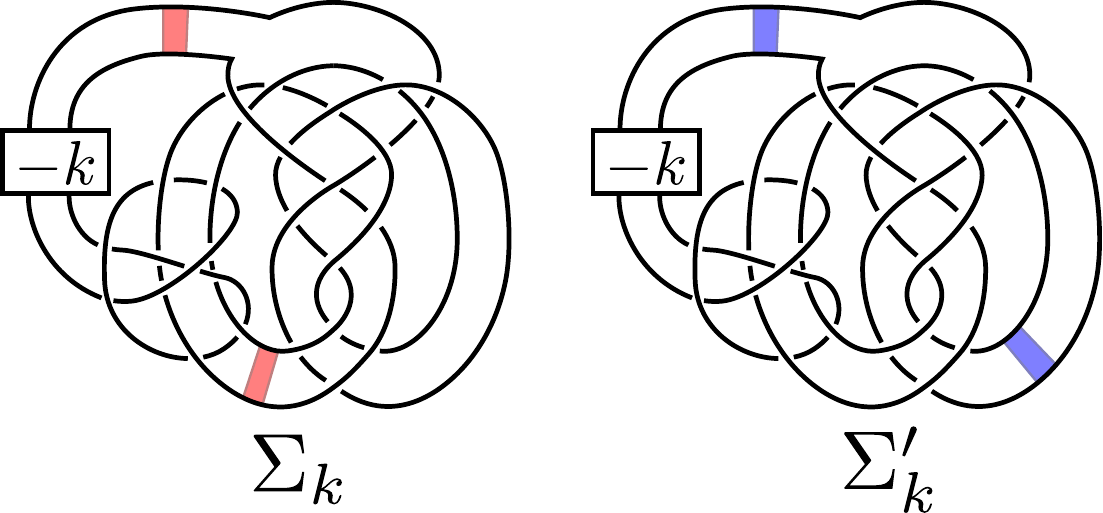}
\par\end{centering}
\caption{\label{fig:asymdisk}\cite[Figure 6]{MR4726569} The exotic slice
disks $\Sigma_{k},\Sigma_{k}'$ of $J_{k}$}
\end{figure}

\begin{thm}[{\cite[Example 3.6]{MR4726569} and proof of \cite[Theorem 1.1]{MR4726569}}]
\label{thm:HS1}For every integer $k\ge1$, the ribbon disks $\Sigma_{k},\Sigma_{k}'\subset D^{4}$
of Figure~\ref{fig:asymdisk} are topologically ambiently isotopic
rel $\partial$ but not smoothly ambiently isotopic rel $\partial$.
In fact, there exists an element $\phi\in Kh(J_{k})$ such that $Kh(m(\Sigma_{k}))(\phi)=\pm1$
and $Kh(m(\Sigma_{k}'))(\phi)=0$, where 
\[
Kh(m(\Sigma_{k})),Kh(m(\Sigma_{k}')):Kh(J_{k})\to\mathbb{Z}
\]
are the cobordism maps on Khovanov homology over $\mathbb{Z}$, where
$m(\Sigma_{k})$ (resp.\ $m(\Sigma_{k}')$) means the mirror of $\Sigma_{k}$
(resp.\ $\Sigma_{k}'$).
\end{thm}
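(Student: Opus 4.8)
This is a theorem of Hayden and Sundberg, so the plan is to recall the structure of their argument. It splits into two logically independent parts: the topological ambient isotopy, and the construction of the class $\phi$ together with the $\mathbb{Z}$-coefficient computation of the two cobordism maps. The second part subsumes the ``not smoothly ambiently isotopic rel $\partial$'' assertion: a smooth ambient isotopy rel $\partial$ from $\Sigma_k$ to $\Sigma_k'$ would induce, by functoriality of Khovanov homology along movies, an equality $Kh(m(\Sigma_k)) = Kh(m(\Sigma_k'))$ of maps $Kh(J_k) \to \mathbb{Z}$ (over $\mathbb{Z}$ this functoriality is well-defined only up to an overall sign, which is why the theorem records $\pm 1$), contradicting $Kh(m(\Sigma_k))(\phi) = \pm 1 \neq 0 = Kh(m(\Sigma_k'))(\phi)$.

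\emph{Topological isotopy rel $\partial$.} The reduction I would use is that $\Sigma_k$ and $\Sigma_k'$ are topologically ambiently isotopic rel $\partial$ as soon as there is a self-homeomorphism of $D^4$ that restricts to the identity on $S^3$ and carries $\Sigma_k$ to $\Sigma_k'$: every such homeomorphism is topologically isotopic rel $\partial$ to the identity by the Alexander trick, and that isotopy drags $\Sigma_k$ to $\Sigma_k'$. One obtains such a homeomorphism of pairs from a meridian-preserving homeomorphism, rel boundary, between the exteriors $D^4 \setminus \nu(\Sigma_k)$ and $D^4 \setminus \nu(\Sigma_k')$, extended over the (trivial) tubular neighborhoods; the needed exterior homeomorphism is produced either by an explicit model --- both disks are swept out by the same $3$-component unlink together with two bands, and for this family the two band attachments differ by a localized modification realizable by an ambient homeomorphism --- or, failing an explicit model, by topological surgery, once the exteriors are checked to be (simple-)homotopy equivalent rel $\partial$ over a fundamental group that is good. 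The crucial point is that the same modification cannot be realized smoothly; that is the content of the next part.

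\emph{The class $\phi$ and the smooth distinction.} Give each disk a movie presentation dual to its ribbon structure: reading $m(\Sigma_k)$ from $J_k$ downward, two saddle moves (the mirrored bands) carry $J_k$ to a $3$-component unlink $U$, followed by three deaths, so that
\[
Kh(m(\Sigma_k)) \colon Kh(J_k) \xrightarrow{\ \text{two saddle maps}\ } Kh(U) \xrightarrow{\ \varepsilon^{\otimes 3}\ } \mathbb{Z},
\]
and likewise for $m(\Sigma_k')$ with the two bands attached in the other way. One then exhibits a class $\phi \in Kh(J_k)$ --- found by a direct, possibly computer-assisted, analysis of the Khovanov chain complex of the diagram of $J_k$ together with its two bands --- whose image under the two saddle maps of $\Sigma_k$ has nonzero $X^{\otimes 3}$-coefficient, while its image under those of $\Sigma_k'$ does not; the first composite then evaluates on $\phi$ to $\pm 1$ and the second to $0$. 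Tracking such a class through both movies, and checking that the argument is uniform in $k$ (the knots $J_k$ and the two bands differ across $k$ only in a bounded region, so one class and one computation handle the whole family), completes the argument.

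\emph{Main obstacle.} The genuine difficulty is this last computation over $\mathbb{Z}$: one must make the two integral cobordism maps concretely computable --- which rests on the functoriality of Khovanov homology over $\mathbb{Z}$ up to sign --- and then locate a homology class of $J_k$ that the two band configurations treat differently. Everything else is either bookkeeping with movies or a black-box appeal to the Alexander trick and topological surgery.
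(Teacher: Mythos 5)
This theorem is not proved in the paper at all: it is imported verbatim from Hayden--Sundberg \cite{MR4726569}, and the only argument the paper supplies is the attribution (plus the footnote explaining that the topological isotopy is obtained via Conway--Powell by first showing the disk exteriors are homeomorphic rel $\partial$). Your outline faithfully reconstructs the structure of that cited proof --- Alexander trick plus topological surgery on the exteriors for the topological part, and the factorization of $Kh(m(\Sigma_k))$ through the band maps and $\varepsilon^{\otimes 3}$ for the smooth part --- but the actual mathematical content of Hayden--Sundberg's argument, namely exhibiting the specific class $\phi$ and carrying out the integral evaluation uniformly in $k$, remains a black box in your write-up, exactly as you acknowledge; since the paper itself treats the result as a citation, this is an acceptable level of detail here, though it would not constitute an independent proof.
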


Recall that the link cobordism maps on Khovanov homology are well-defined
up to sign, and only depend on the isotopy class rel $\partial$ of
the surface \cite{MR1740682,Ja,MWW}.
\begin{rem}
For Theorem~\ref{thm:HS1}, we do not need that Khovanov homology
satisfies the sweep-around move \cite{MWW}; we only need that the
Khovanov map for link cobordisms $S\subset[0,1]\times\mathbb{R}^{3}$
only depends, up to sign, on the isotopy class of $S$ rel $\partial$
\cite{MR1740682,Ja}.

Indeed, let $\Sigma,\Sigma'\subset D^{4}$ be two disks such that
$\partial\Sigma=\partial\Sigma'$. Let $x\in\partial D^{4}$ be away
from $\partial\Sigma=\partial\Sigma'$, and identify a small closed
neighborhood $\overline{N}(x)\subset D^{4}$ of $x$ with $[0,1]\times D^{3}$,
where $\overline{N}(x)\cap\partial D^{4}=0\times D^{3}$ and $x=(0,0)\in0\times D^{3}$.
If there exists an isotopy rel $\partial$ between two disks $\Sigma,\Sigma'\subset D^{4}$,
then the isotopy does not intersect a sufficiently small neighborhood
$[0,\varepsilon]\times\varepsilon D^{3}\subset\overline{N}(x)$ of
$x$. Identify $D^{4}\backslash([0,\varepsilon)\times\varepsilon D^{3})\cong[0,1]\times\mathbb{R}^{3}$;
then $\Sigma,\Sigma'$ are link cobordisms in $[0,1]\times\mathbb{R}^{3}$
which are isotopic rel $\partial$ in $[0,1]\times\mathbb{R}^{3}$.
\end{rem}

\begin{figure}[H]
\begin{centering}
\includegraphics[scale=0.5]{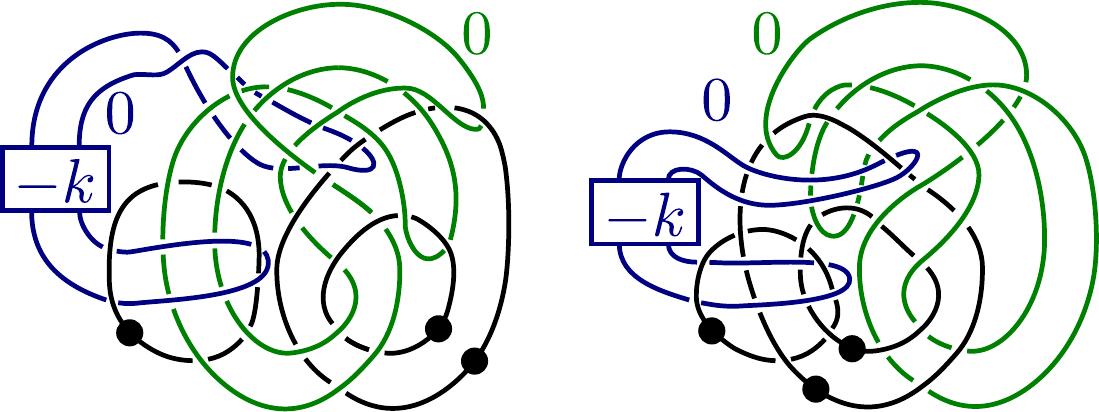}
\par\end{centering}
\caption{\label{fig:disk-exterior}Handle diagrams of the exotic pairs of disk
exteriors $D^{4}\backslash N(\Sigma_{k})$ and $D^{4}\backslash N(\Sigma_{k}')$}
\end{figure}

We deduce from Theorem~\ref{thm:HS1} (Corollary~\ref{cor:For-,-the})
that for every $k\ge1$, the compact, orientable $4$-manifolds of
Figure~\ref{fig:disk-exterior} are exotic. This follows from (1)
that they are the disk exteriors $D^{4}\backslash N(\Sigma_{k})$
and $D^{4}\backslash N(\Sigma_{k}')$ (see \cite[Section 6.2]{MR1707327})
where $N$ means open tubular neighborhood, (2) the following well
known lemma (Lemma~\ref{lem:wellknown}), and (3) that the mapping
class group of the boundary $S_{0}^{3}(J_{k})$ (the $0$-surgery
of $S^{3}$ along the knot $J_{k}$) of $D^{4}\backslash N(\Sigma_{k})$
and $D^{4}\backslash N(\Sigma_{k}')$ are trivial.
\begin{lem}[{\cite[Lemma 4.7]{MWW}, \cite[Lemma 4.7]{MR4504654}, \cite{334386}}]
\label{lem:wellknown}Let $\Sigma,\Sigma'\subset D^{4}$ be properly
embedded surfaces with the same boundary. If the pairs $(D^{4},\Sigma)$
and $(D^{4},\Sigma')$ are diffeomorphic rel $\partial$, then $\Sigma$
and $\Sigma'$ are smoothly ambiently isotopic rel $\partial$.
\end{lem}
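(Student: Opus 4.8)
The plan is to promote a diffeomorphism of pairs to an ambient isotopy. So let $F\colon(D^{4},\Sigma)\to(D^{4},\Sigma')$ be a diffeomorphism with $F|_{\partial D^{4}}=\mathrm{id}$; the goal is an ambient isotopy of $D^{4}$ rel $\partial$ carrying $\Sigma$ to $\Sigma'$. The first step is to normalize $F$ near the boundary. Fix a collar $S^{3}\times[0,1]\hookrightarrow D^{4}$ of $\partial D^{4}$ in which $\Sigma$ and $\Sigma'$ both appear as $L\times[0,1]$, where $L=\partial\Sigma=\partial\Sigma'$. Since $F$ restricts to the identity on $\partial D^{4}$ (and hence preserves orientation), uniqueness of collars and of tubular neighbourhoods lets me isotope $F$ rel $\partial$ so that it is the identity on this collar and respects the product structure of the two surface-collars. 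Restricting to the complementary $4$-ball, I may therefore assume in addition that $F$ is the identity on a neighbourhood of $\partial D^{4}$ and that $\Sigma$ and $\Sigma'$ coincide there.

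Next I would trade the ambient diffeomorphism for a concordance, since one cannot simply isotope $F$ to $\mathrm{id}_{D^{4}}$ rel $\partial$: whether every such self-diffeomorphism is smoothly isotopic to the identity rel $\partial$ is a famous open problem, a form of the $4$-dimensional Smale conjecture. Instead, work in the $5$-ball $W=D^{4}\times[0,1]$ (corners rounded), whose boundary $S^{4}$ is the union of the hemispheres $D^{4}\times\{1\}$ and $(D^{4}\times\{0\})\cup(\partial D^{4}\times[0,1])$. Let $\widehat F\colon S^{4}\to S^{4}$ be $F$ on the first hemisphere and the identity on the second; this is smooth because $F=\mathrm{id}$ near $\partial D^{4}$, and it preserves orientation. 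Since there are no exotic $5$-spheres ($\Theta_{5}=0$), every orientation-preserving self-diffeomorphism of $S^{4}$ extends over $D^{5}$, so $\widehat F$ extends to a diffeomorphism $\Phi\colon W\to W$; note $\Phi|_{D^{4}\times\{0\}}=\mathrm{id}$, $\Phi|_{\partial D^{4}\times[0,1]}=\mathrm{id}$, and $\Phi|_{D^{4}\times\{1\}}=F$. Then $\mathcal C:=\Phi(\Sigma\times[0,1])$ is a properly embedded cobordism in $W$ from $\Sigma\times\{0\}$ to $\Sigma'\times\{1\}$, meeting $\partial D^{4}\times[0,1]$ in $L\times[0,1]$ and abstractly a product $\Sigma\times[0,1]$; in other words, $\mathcal C$ is a smooth concordance rel $\partial$ from $\Sigma$ to $\Sigma'$.

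The last step, and the one I expect to be the main obstacle, is to straighten $\mathcal C$: to isotope it, rel $\partial W$, to a cobordism on which the $[0,1]$-coordinate is regular, i.e.\ to the trace of an ambient isotopy of $D^{4}$ rel $\partial$ --- after which the isotopy-extension theorem completes the proof. The restriction of the $[0,1]$-coordinate to $\mathcal C$ is a Morse function whose critical points occur in algebraically canceling pairs because $\mathcal C$ is a product, and the content is to realize these cancellations geometrically inside the $5$-manifold $W$, which comes down to embedding the relevant Whitney disks and hence to controlling $\pi_{1}$ of the complement of $\mathcal C$ in $W$. I do not expect genuine obstructions here --- one is only repositioning the codimension-two submanifold $\mathcal C$, not attempting to trivialize $\Phi$ itself --- but making this precise is the technical heart of the matter. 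Alternatively, one may simply cite the folklore arguments recorded in \cite[Lemma 4.7]{MWW}, \cite[Lemma 4.7]{MR4504654}, and \cite{334386}.
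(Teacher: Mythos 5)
The last step of your argument is a genuine gap, and it is not a removable technicality: what you need there is ``smooth concordance rel $\partial$ implies smooth ambient isotopy rel $\partial$'' for the codimension-two submanifold $\mathcal C\subset W=D^{4}\times[0,1]$, and no such general principle is available. Hudson's concordance-implies-isotopy theorem requires codimension $\ge 3$; in codimension $2$ the principle fails badly (classical knot concordance in $S^{3}\times[0,1]$ is the standard counterexample), and for disks in $D^{4}$ it is not a theorem one can invoke --- it is entangled with exactly the exotic phenomena this lemma is being used to detect. Your Whitney-disk sketch does not repair this: the critical points of the height function on $\mathcal C$ cancel algebraically because $\mathcal C$ is an abstract product, but realizing the cancellation geometrically requires embedded, correctly framed Whitney disks in the complement $W\setminus\mathcal C$, over whose fundamental group you have no control. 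So the ``technical heart'' you defer is precisely where the proof would break. (Steps one and two are fine, including the use of $\Theta_{5}=0$ to extend $\widehat F$ over $D^{5}$, but they lead into a dead end.)

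The intended argument never leaves dimension four and needs none of this. After your (correct) normalization --- $F=\mathrm{id}$ on a neighborhood of the boundary, say on $D^{4}\backslash\frac{1}{3}D^{4}$, and $\Sigma,\Sigma'$ disjoint from $\frac{1}{4}D^{4}$ --- take a radial ambient isotopy $(\varphi_{t})$ rel $\partial$ that dilates $\frac{1}{4}D^{4}$ to $\frac{1}{2}D^{4}$, so that $\varphi_{1}(\Sigma)$ lies entirely in the region where $F$ is the identity. Then $(\varphi_{t})$ carries $\Sigma$ to $\varphi_{1}(\Sigma)$, while the conjugated family $(F\circ\varphi_{t}\circ F^{-1})$ is a smooth ambient isotopy rel $\partial$ carrying $\Sigma'$ to $F(\varphi_{1}(F^{-1}(\Sigma')))=F(\varphi_{1}(\Sigma))=\varphi_{1}(\Sigma)$. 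Concatenating the two isotopies gives the desired ambient isotopy from $\Sigma$ to $\Sigma'$: the point is not to trivialize $F$, but to push the surfaces into the region where $F$ is already trivial.
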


\begin{proof}
Assume that there exists a diffeomorphism $f:(D^{4},\Sigma)\to(D^{4},\Sigma')$
that is the identity on $\partial D^{4}$. After modifying $f,\Sigma,\Sigma'$
if necessary, we can further assume that $\Sigma,\Sigma'$ avoid a
neighborhood of the origin, say $\frac{1}{4}D^{4}$, and that $f$
is the identity on a neighborhood of the boundary, say $D^{4}\backslash\frac{1}{3}D^{4}$.

Consider a radial smooth ambient isotopy rel $\partial$ $(\varphi_{t}):D^{4}\times I\to D^{4}$
that ``dilates'' $\frac{1}{4}D^{4}$ to $\frac{1}{2}D^{4}$. Then
$(\varphi_{t})$ is a smooth ambient isotopy between $\Sigma$ and
$\varphi_{1}(\Sigma)$, and $(f\circ\varphi_{t}\circ f^{-1})$ is
a smooth ambient isotopy between $\Sigma'$ and $f\circ\varphi_{1}\circ f^{-1}(\Sigma')=\varphi_{1}(\Sigma)$.
Hence $\Sigma$ and $\Sigma'$ are smoothly ambiently isotopic.
\end{proof}

\begin{cor}
\label{cor:For-,-the}For every integer $k\ge1$, the compact, orientable
$4$-manifolds $D^{4}\backslash N(\Sigma_{k})$ and $D^{4}\backslash N(\Sigma_{k}')$
are homeomorphic but not diffeomorphic.
\end{cor}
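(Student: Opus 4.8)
The plan is to combine Theorem~\ref{thm:HS1} with Lemma~\ref{lem:wellknown} to obtain non-diffeomorphism, and to invoke topological ambient isotopy rel $\partial$ together with a mapping class group triviality argument to obtain homeomorphism. First I would establish that $D^{4}\backslash N(\Sigma_{k})$ and $D^{4}\backslash N(\Sigma_{k}')$ are not diffeomorphic rel $\partial$. Suppose they were, via a diffeomorphism $g$. One checks that a diffeomorphism of disk exteriors rel $\partial$ extends across the tubular neighborhoods to a diffeomorphism $f:(D^{4},\Sigma_{k})\to(D^{4},\Sigma_{k}')$ that is the identity on $\partial D^{4}$; here one uses that both disks have the same boundary $J_{k}\subset S^{3}$ and that the normal bundles are trivial, so the framings can be matched. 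By Lemma~\ref{lem:wellknown} this forces $\Sigma_{k}$ and $\Sigma_{k}'$ to be smoothly ambiently isotopic rel $\partial$, contradicting Theorem~\ref{thm:HS1}.

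Next I would upgrade ``not diffeomorphic rel $\partial$'' to ``not diffeomorphic''. This is where the triviality of the mapping class group of the boundary $S_{0}^{3}(J_{k})$ enters, as flagged in the discussion preceding the corollary: any diffeomorphism between the two $4$-manifolds restricts to a self-diffeomorphism of the common boundary $S_{0}^{3}(J_{k})$, and if every such self-diffeomorphism is isotopic to the identity, then after an isotopy the $4$-manifold diffeomorphism can be arranged to be the identity on the boundary, reducing to the rel $\partial$ case just handled. So I would cite or verify that $\mathrm{MCG}(S_{0}^{3}(J_{k}))$ is trivial — presumably via SnapPy or a known computation for these specific knots — and conclude the two manifolds are not diffeomorphic.

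For the homeomorphism statement I would run the same extension argument in the topological category: by Theorem~\ref{thm:HS1} the disks $\Sigma_{k}$, $\Sigma_{k}'$ are topologically ambiently isotopic rel $\partial$, i.e.\ there is a homeomorphism $(D^{4},\Sigma_{k})\to(D^{4},\Sigma_{k}')$ that is the identity on the boundary. Removing open tubular neighborhoods of the disks (topological normal neighborhoods exist here since the disks are locally flat, being topologically isotopic to smooth disks) yields a homeomorphism $D^{4}\backslash N(\Sigma_{k})\to D^{4}\backslash N(\Sigma_{k}')$. Thus the manifolds are homeomorphic, completing the proof that they form an exotic pair.

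The main obstacle is the extension step: carefully justifying that a rel-$\partial$ diffeomorphism (or homeomorphism) of the disk exteriors promotes to one of the pairs $(D^{4},\Sigma_{k})$. This requires controlling the identification of the boundary tori/normal circle bundles — one must match the meridian and longitude framings coming from the two sides — and in the topological setting it requires local flatness to get a normal neighborhood at all. I would spell this out via the uniqueness of tubular neighborhoods (smooth case) and of topological normal bundles of locally flat surfaces (topological case), together with the observation that the relevant framing is determined by the $0$-framing since both disks are null-homologous with the same boundary knot. The appeal to $\mathrm{MCG}(S_{0}^{3}(J_{k}))=1$ is the other essential input, but that is a citable computation rather than a conceptual difficulty.
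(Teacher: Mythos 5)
Your proposal is correct and follows essentially the same route as the paper: non-diffeomorphism rel $\partial$ via Theorem~\ref{thm:HS1} and Lemma~\ref{lem:wellknown} (with the tubular-neighborhood extension step you spell out left implicit in the paper), promotion to absolute non-diffeomorphism via triviality of ${\rm MCG}(S_{0}^{3}(J_{k}))$ (verified in the paper with SnapPy), and the homeomorphism statement from the topological ambient isotopy of the disks (the paper cites \cite[Lemma 2.5]{CP} for the exterior step you carry out by hand). The only point worth making explicit is that the mapping class group check must also exclude orientation-\emph{reversing} self-diffeomorphisms of $S_{0}^{3}(J_{k})$, since an a priori diffeomorphism of the exteriors need not preserve orientation; the paper's isometry-group computation covers this.
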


\begin{proof}
First, $D^{4}\backslash N(\Sigma_{k})$ and $D^{4}\backslash N(\Sigma_{k}')$
are homeomorphic rel $\partial$ since the disks $\Sigma_{k}$ and
$\Sigma_{k}'$ are topologically ambiently isotopic rel $\partial$
by Theorem~\ref{thm:HS1} (see, for instance, \cite[Lemma 2.5]{CP}).\footnote{In fact, that the disks are topologically ambiently isotopic rel $\partial$
is proved by first proving that their exteriors are homeomorphic rel
$\partial$: Hayden and Sundberg \cite{MR4726569} use \cite{CP}
to show that the disks are topologically ambiently isotopic, and Conway
and Powell \cite{CP} first show that the disk exteriors are homeomorphic
rel $\partial$ (see the last paragraph of the proof of Theorem~1.4).}

Let us show that $D^{4}\backslash N(\Sigma_{k})$ and $D^{4}\backslash N(\Sigma_{k}')$
are not diffeomorphic. First, Theorem~\ref{thm:HS1} and Lemma~\ref{lem:wellknown}
imply that $(D^{4},\Sigma_{k})$ and $(D^{4},\Sigma_{k}')$ are not
diffeomorphic rel $\partial D^{4}$, and so $D^{4}\backslash N(\Sigma_{k})$
and $D^{4}\backslash N(\Sigma_{k}')$ are not diffeomorphic rel $\partial$.
Hence, we are left to show that the mapping class group of the boundary
$S_{0}^{3}(J_{k})$ is trivial for all $k\ge1$. (To the cautious
reader: the below indeed also checks that there are no orientation
reversing self-diffeomorphisms of $S_{0}^{3}(J_{k})$.)

To show that ${\rm MCG}(S_{0}^{3}(J_{k}))$ is trivial, we use SnapPy
\cite{SnapPy} inside Sage \cite{sagemath}. Let us first check it
for $k=1$; we handle the general case in Appendix~\ref{sec:Computation-of-the}
similarly to \cite[Proposition A.3 (a)]{MR4726569}. First, input
$J_{1}$ \cite[\code{j1.lnk}]{ancillaryfiles} as a \code{snappy.ManifoldHP}
object called \code{L}. The following code verifies that $S_{0}^{3}(J_{1})$
is hyperbolic and finds all geodesics with length $\le1$.
\begin{lstlisting}[language=Python,basicstyle={\ttfamily},breaklines=true,showstringspaces=false,columns=fullflexible,keepspaces=true]
L.dehn_fill((0,1))
L.verify_hyperbolicity()   # True
L.length_spectrum_alt(max_len=1, verified=True, bits_prec=1000)
\end{lstlisting}
SnapPy outputs a unique geodesic with real length $0.92213444882961\cdots$
and word \code{cJQpeID} (SnapPy may output a different word). Hence,
any isometry of $S_{0}^{3}(J_{1})$ must fix this geodesic. Now, use
the following code to check that the isometry group of the cusped
hyperbolic manifold given by drilling out that geodesic is trivial.
\begin{lstlisting}[language=Python,basicstyle={\ttfamily},breaklines=true,showstringspaces=false,columns=fullflexible,keepspaces=true]
R = L.drill_word('cJQpeID',verified=True).filled_triangulation().canonical_retriangulation(verified=True)
len(R.isomorphisms_to(R)) # 1
\end{lstlisting}
\end{proof}

\section{\label{sec:Khovanov-homology-and}Khovanov homology and oriented
surfaces in \texorpdfstring{$\mathbb{CP}^{2}\backslash{\rm int}D^{4}$}{CP2{\textbackslash}int(D4)}}

For properly embedded, oriented surfaces $S$ in $k\mathbb{CP}^{2}\backslash({\rm int}D^{4}\sqcup{\rm int}D^{4})$,
Ren and Willis define \cite[Section 6.11]{2402.10452} a map on Khovanov
homology, which is well-defined up to sign and only depends on the
isotopy class rel $\partial$ of $S$. In this section, we recall
the definition for the special case where $S$ is in $D^{4}\#\mathbb{CP}^{2}=:(\mathbb{CP}^{2})^{\circ}$,\footnote{We write $D^{4}\#\mathbb{CP}^{2}$ to make explicit that its oriented
boundary is $S^{3}$ and to avoid confusion in Section~\ref{sec:Exotic-compact,-contractible},
where we blow up surfaces in $D^{4}$.} sketch a direct (skein lasagna free) proof that it only depends on
the isotopy class rel $\partial$, and show that the map is invariant
under diffeomorphisms of $(\mathbb{CP}^{2})^{\circ}$ rel $\partial$.
The readers who are (resp.\ are not) familiar with skein lasagna
modules may skip Subsection~\ref{subsec:A-direct-argument} (resp.\ Subsection~\ref{subsec:A-skein-lasagna}).

\subsection{\label{subsec:A-direct-argument}A direct argument}

Let us first recall the definition of the cobordism map on Khovanov
homology for oriented surfaces $S$ properly embedded in $(\mathbb{CP}^{2})^{\circ}$.
For simplicity, we work over $\mathbb{Z}$. Further assume that $S$
intersects the core $\mathbb{CP}^{1}$ transversely, positively at
$p$ points and negatively at $q$ points. Let $N(\mathbb{CP}^{1})$
be a small open tubular neighborhood of $\mathbb{CP}^{1}$. Then,
$S\cap\partial N(\mathbb{CP}^{1})\subset\partial N(\mathbb{CP}^{1})\cong S^{3}$
is the \emph{negative} $(p+q,p+q)$ torus link, where $p$ of the
strands are oriented oppositely to the other $q$ strands. We denote
the \emph{mirror} of such oriented links as $T(p+q,p+q)_{p,q}$. The
cobordism map $Kh_{\mathbb{CP}^{2}}(S)$\footnote{We denote it as $Kh_{\mathbb{CP}^{2}}(S)$ for notational clarity
and to emphasize that $S$ is in $(\mathbb{CP}^{2})^{\circ}$.} is defined, up to sign, as the composition 
\begin{equation}
Kh_{\mathbb{CP}^{2}}(S):Kh(m(\partial S))\to Kh(T(p+q,p+q)_{p,q})\to\mathbb{Z},\label{eq:kh-cp2}
\end{equation}
where we define the two maps as follows.

The oriented boundary of $(\mathbb{CP}^{2})^{\circ}\backslash N(\mathbb{CP}^{1})$
is $\partial D^{4}\sqcup(-\partial N(\mathbb{CP}^{1}))$, and the
link $S\cap(-\partial N(\mathbb{CP}^{1}))\subset-\partial N(\mathbb{CP}^{1})\cong S^{3}$
is $T(p+q,p+q)_{p,q}$. Hence, we have a link cobordism
\[
((\mathbb{CP}^{2})^{\circ}\backslash N(\mathbb{CP}^{1}),S\cap((\mathbb{CP}^{2})^{\circ}\backslash N(\mathbb{CP}^{1}))):(S^{3},m(\partial S))\to(S^{3},T(p+q,p+q)_{p,q})
\]
and $(\mathbb{CP}^{2})^{\circ}\backslash N(\mathbb{CP}^{1})\cong[0,1]\times S^{3}$.
The first map of (\ref{eq:kh-cp2}) is the induced cobordism map.

Let ${\rm gr}_{q}(p,q)$ be the quantum filtration degree of the Lee
generator of $T(p+q,p+q)_{p,q}$ over $\mathbb{Q}$. Then, Ren shows
\cite[Corollary 2.2]{MR4843752} that 
\[
Kh^{0,{\rm gr}_{q}(p,q)}(T(p+q,p+q)_{p,q})\cong\mathbb{Z}.
\]
The second map of (\ref{eq:kh-cp2}) is projection onto the $(0,{\rm gr}_{q}(p,q))$
grading summand.
\begin{rem}
Ren \cite[Theorem 1.1]{MR4843752} also shows that ${\rm gr}_{q}(p,q)=(p-q)^{2}-2{\rm max}(p,q)$,
and so the cobordism map $Kh_{\mathbb{CP}^{2}}(S)$ has $(h,q)$-bidegree
$(0,\chi(S)-\alpha^{2}+|\alpha|)$, where $\mathbb{Z}$ is supported
in bidegree $(0,0)$ and $[S]\in H_{2}((\mathbb{CP}^{2})^{\circ})$
is $\alpha=p-q$ times a generator.
\end{rem}

\begin{rem}
\label{rem:blowdown}From this description, we can see that if $S$
intersects the core $\mathbb{CP}^{1}$ of $\mathbb{CP}^{2}$ exactly
once, then the cobordism map $Kh_{\mathbb{CP}^{2}}(S)$ is the cobordism
map of the surface in $D^{4}$ given by \emph{blowing down} $\mathbb{CP}^{1}$.
In other words: let $\Sigma\subset D^{4}$ be the surface obtained
from $S$ by replacing $\overline{N}(\mathbb{CP}^{1})$ with $D^{4}$
and capping the unknot $S\cap\partial N(\mathbb{CP}^{1})$ with a
boundary parallel disk in this $D^{4}$. Then $Kh_{\mathbb{CP}^{2}}(S)$
is the cobordism map $Kh(\Sigma):Kh(m(\partial S))\to\mathbb{Z}$
induced by $\Sigma$.
\end{rem}

Let us sketch a direct proof that $Kh_{\mathbb{CP}^{2}}(S)$ only
depends on the isotopy class rel $\partial$ of $S$ up to sign. This
is similar to the proof that $\Phi^{-1}$ is well-defined in the proof
of \cite[Theorem 1.1]{MR4445546}. Since the Khovanov cobordism map
only depends, up to sign, on the isotopy class rel $\partial$ for
surfaces in $[0,1]\times S^{3}$, we only have to check that $Kh_{\mathbb{CP}^{2}}(S)$
is invariant, up to sign, under isotopies supported near $\mathbb{CP}^{1}$,
and hence only for the following two kinds of isotopies: (1) those
that are induced by moving the intersections with $\mathbb{CP}^{1}$
around, and (2) those that introduce or remove two intersections with
$\mathbb{CP}^{1}$, of opposite sign.

The first case corresponds to checking that isotoping the strands
of $T(p+q,p+q)_{p,q}$ induces an isomorphism on $Kh^{0,{\rm gr}_{q}(p,q)}\cong\mathbb{Z}$,
which is clear.

For the second case, consider the composition of two saddle cobordisms
\begin{equation}
T(p+q,p+q)_{p,q}\to T(p+q,p+q)_{p,q}\sqcup U\to T(p+q+2,p+q+2)_{p+1,q+1}\label{eq:two-saddle}
\end{equation}
where $U$ is an unlinked unknot. The second case corresponds to checking
that the induced map
\begin{equation}
Kh^{0,{\rm gr}_{q}(p,q)}(T(p+q,p+q)_{p,q})\to Kh^{0,{\rm gr}_{q}(p+1,q+1)}(T(p+q+2,p+q+2)_{p+1,q+1})\label{eq:k1iso}
\end{equation}
is an isomorphism. This follows from \cite[Theorem 2.1 (1)]{MR4843752}:
the first statement of \cite[Theorem 2.1 (1)]{MR4843752} is that
$Kh^{0,i}(T(p+q,p+q)_{p,q})=0$ for $i<{\rm gr}_{q}(p,q)$. Hence,
\begin{align*}
Kh^{0,{\rm gr}_{q}(p,q)}(T(p+q,p+q)_{p,q}) & =Kh^{0,{\rm gr}_{q}(p,q)}(T(p+q,p+q)_{p,q})\oplus Kh^{0,{\rm gr}_{q}(p,q)-2}(T(p+q,p+q)_{p,q})\\
 & \cong Kh^{0,{\rm gr}_{q}(p,q)-1}(T(p+q,p+q)_{p,q}\sqcup U),
\end{align*}
and the first saddle cobordism of (\ref{eq:two-saddle}) induces this
isomorphism. The second statement of \cite[Theorem 2.1 (1)]{MR4843752}
is that the second saddle cobordism of (\ref{eq:two-saddle}) induces
an isomorphism
\[
Kh^{0,{\rm gr}_{q}(p,q)-1}(T(p+q,p+q)_{p,q}\sqcup U)\to Kh^{0,{\rm gr}_{q}(p+1,q+1)}(T(p+q+2,p+q+2)_{p+1,q+1}).
\]
Combining the above two, we obtain that (\ref{eq:k1iso}) is an isomorphism.

Now, we show that $Kh_{\mathbb{CP}^{2}}(S)$ is invariant under diffeomorphisms
of $(\mathbb{CP}^{2})^{\circ}$ rel $\partial$.
\begin{lem}
\label{lem:tqft-diff}Let $S,S'$ be properly embedded, oriented surfaces
in $(\mathbb{CP}^{2})^{\circ}:=D^{4}\#\mathbb{CP}^{2}$ such that
$[S]=[S']\in H_{2}((\mathbb{CP}^{2})^{\circ};\mathbb{Z})$. If there
exists a diffeomorphism 
\[
((\mathbb{CP}^{2})^{\circ},S)\cong((\mathbb{CP}^{2})^{\circ},S')\ {\rm rel}\ \partial,
\]
then the induced maps 
\[
Kh_{\mathbb{CP}^{2}}(S),Kh_{\mathbb{CP}^{2}}(S'):Kh(m(\partial S))\to\mathbb{Z}
\]
agree up to sign.
\end{lem}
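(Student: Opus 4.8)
The plan is to reduce the statement to the (already established) fact that the Khovanov cobordism map for surfaces in $[0,1]\times S^3$ depends only on the rel-$\partial$ isotopy class, by using the diffeomorphism to transport one surface to the other within the complement of a neighborhood of the core $\mathbb{CP}^1$. Let $f\colon((\mathbb{CP}^2)^\circ,S)\to((\mathbb{CP}^2)^\circ,S')$ be a diffeomorphism that is the identity on $\partial D^4=S^3$. After a preliminary isotopy of $S$ and $S'$ (supported away from $\partial$) we may assume both meet the core $\mathbb{CP}^1$ transversely; by the definition recalled above, $Kh_{\mathbb{CP}^2}(S)$ and $Kh_{\mathbb{CP}^2}(S')$ are computed from the cobordisms obtained by deleting the fixed tubular neighborhood $N(\mathbb{CP}^1)$, composed with the projection onto the extremal grading summand of $Kh(T(p+q,p+q)_{p,q})$. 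Since $[S]=[S']$, the algebraic intersection number $p-q$ is the same for both, so the \emph{target} grading summand is literally the same; the geometric intersection numbers $(p,q)$ need not agree a priori, but the well-definedness argument sketched just before the lemma (invariance under creating/removing a cancelling pair of intersections, via \cite[Theorem 2.1(1)]{MR4843752}) lets us normalize both surfaces, by a further rel-$\partial$ isotopy, to have the \emph{minimal} geometric intersection $(\max(\alpha,0),\max(-\alpha,0))$ with $\mathbb{CP}^1$. Thus it suffices to treat the case where $S$ and $S'$ both meet $\mathbb{CP}^1$ in the same number of points with the same signs.

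Next I would arrange that $f$ respects the neighborhood of the core. The diffeomorphism $f$ carries $\mathbb{CP}^1$ to some embedded sphere $f(\mathbb{CP}^1)\subset(\mathbb{CP}^2)^\circ$ representing a generator of $H_2$, but after an ambient isotopy of $(\mathbb{CP}^2)^\circ$ rel $\partial$ we may assume $f(\mathbb{CP}^1)=\mathbb{CP}^1$ and indeed that $f$ maps the chosen tubular neighborhood $\overline{N}(\mathbb{CP}^1)$ to itself (two tubular neighborhoods of the same submanifold are ambiently isotopic rel $\partial$, and the normal bundle has Euler number $+1$ in both cases so the framings match). Composing with such an ambient isotopy — which does not change the rel-$\partial$ isotopy class of $S'$, hence not $Kh_{\mathbb{CP}^2}(S')$ — we may take $f(\overline{N}(\mathbb{CP}^1))=\overline{N}(\mathbb{CP}^1)$. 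Then $f$ restricts to a diffeomorphism
\[
g\colon\bigl((\mathbb{CP}^2)^\circ\setminus N(\mathbb{CP}^1),\,S\cap(\cdot)\bigr)\;\longrightarrow\;\bigl((\mathbb{CP}^2)^\circ\setminus N(\mathbb{CP}^1),\,S'\cap(\cdot)\bigr)
\]
which is the identity on $\partial D^4$ and some diffeomorphism of $S^3=\partial N(\mathbb{CP}^1)$ on the other boundary component. Since $(\mathbb{CP}^2)^\circ\setminus N(\mathbb{CP}^1)\cong[0,1]\times S^3$, we have presented the two cobordisms $S\cap(\cdot)$ and $S'\cap(\cdot)$ in the cylinder as related by a diffeomorphism fixing one end.

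Finally I would promote $g$ to an actual rel-$\partial$ isotopy between the two cylindrical cobordisms, so that the invariance of the Khovanov cobordism map in $[0,1]\times S^3$ applies directly. The restriction of $g$ to the outer end $\partial N(\mathbb{CP}^1)\cong S^3$ is a diffeomorphism taking the link $S\cap\partial N(\mathbb{CP}^1)$, which is $T(p+q,p+q)_{p,q}$ with our normalization, to $S'\cap\partial N(\mathbb{CP}^1)$, also $T(p+q,p+q)_{p,q}$; since $\pi_0\,\mathrm{Diff}(S^3)=\mathbb{Z}/2$ and we are free to choose orientations, we may assume this boundary diffeomorphism is isotopic to the identity, and moreover (after isotoping the links, which only changes $Kh_{\mathbb{CP}^2}$ by an isomorphism of $\mathbb{Z}$ as in case (1) of the well-definedness discussion) that it carries the one link to the other respecting the torus-link structure used to pin down the extremal grading. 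Pre- and post-composing $g$ with these isotopies yields a diffeomorphism of $[0,1]\times S^3$ that is the identity on \emph{both} ends and carries $S\cap(\cdot)$ to $S'\cap(\cdot)$; by the isotopy extension theorem applied to a path in $\mathrm{Diff}([0,1]\times S^3,\partial)$ joining it to the identity — equivalently, by noting that pairs of properly embedded surfaces in $[0,1]\times S^3$ that are diffeomorphic rel $\partial$ are rel-$\partial$ isotopic, the cylinder analogue of Lemma~\ref{lem:wellknown} — the two surfaces are ambiently isotopic rel $\partial$ inside $[0,1]\times S^3$. Hence the two induced maps $Kh(m(\partial S))\to Kh(T(p+q,p+q)_{p,q})$ agree up to sign, and composing with the common projection to $\mathbb{Z}$ gives $Kh_{\mathbb{CP}^2}(S)=\pm Kh_{\mathbb{CP}^2}(S')$.

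The main obstacle I anticipate is the bookkeeping in the second and third paragraphs: making precise that $f$ can be isotoped rel $\partial$ to preserve $\overline{N}(\mathbb{CP}^1)$ with the correct framing, and that the resulting boundary diffeomorphism of $S^3$ can be absorbed into link isotopies without affecting the extremal grading summand. The topological inputs (uniqueness of tubular neighborhoods, $\pi_0\,\mathrm{Diff}(S^3)$, isotopy extension, and the cylinder version of Lemma~\ref{lem:wellknown}) are standard, but one has to check that each normalization step genuinely preserves the rel-$\partial$ isotopy class of the \emph{other} surface and that the grading normalization $(p,q)\mapsto(\max(\alpha,0),\max(-\alpha,0))$ is compatible with the diffeomorphism — this is where the hypothesis $[S]=[S']$ is essential, since it is exactly what guarantees the two normalizations land in the same grading.
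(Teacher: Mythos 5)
The key step of your second paragraph --- ``after an ambient isotopy of $(\mathbb{CP}^{2})^{\circ}$ rel $\partial$ we may assume $f(\mathbb{CP}^{1})=\mathbb{CP}^{1}$'' --- is a genuine gap, and it is exactly the difficulty the lemma is designed to confront. Uniqueness of tubular neighborhoods only applies once the two core spheres already coincide (or are known to be ambiently isotopic); what you actually need is that the embedded degree-one sphere $f(\mathbb{CP}^{1})$ is smoothly ambiently isotopic rel $\partial$ to $\mathbb{CP}^{1}$. That is not a standard fact: it amounts to asking whether $f$ can be isotoped to a diffeomorphism preserving $\mathbb{CP}^{1}$, i.e.\ to questions about the smooth mapping class group of $(\mathbb{CP}^{2})^{\circ}$ rel $\partial$ and about smooth unknotting of degree-one spheres in $\mathbb{CP}^{2}$, which are open (and lightbulb-type isotopy theorems do not apply, since such a sphere admits no transverse sphere). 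What \emph{is} legitimate is to isotope $f$ to be the identity on $(\mathbb{CP}^{2})^{\circ}\backslash N_{1/3}$, because that region is a collar of $\partial(\mathbb{CP}^{2})^{\circ}$; this is what the paper does. But then $f$ may still act arbitrarily inside $N_{1/3}$, so the image surface can meet $\mathbb{CP}^{1}$ in $2\ell$ extra points, and the resulting cobordism in $\overline{N_{1/3}}\backslash N_{\varepsilon}$ is only known to be a union of annuli with $2\ell$ disks removed, not a product. The paper handles this by a Khovanov-homology computation: any such surface induces, up to sign, the same map on the extremal groups $Kh^{0,{\rm gr}_{q}}$ (via its effect on Lee generators in the Lee spectral sequence), and that map is an isomorphism by Ren's \cite[Theorem 2.1 (1)]{MR4843752}. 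Your argument has no substitute for this step because the geometric normalization you assume is unavailable.

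A secondary issue: even granting the normalization, your last step invokes ``pairs of properly embedded surfaces in $[0,1]\times S^{3}$ that are diffeomorphic rel $\partial$ are rel-$\partial$ isotopic'' as a cylinder analogue of Lemma~\ref{lem:wellknown}. The proof of Lemma~\ref{lem:wellknown} works by dilating the surface entirely into the region where $f$ is the identity, which is possible in $D^{4}$ because the surface can be pushed off the center; a cobordism spanning both ends of $[0,1]\times S^{3}$ cannot be confined to a collar of one end, and $\pi_{0}{\rm Diff}([0,1]\times S^{3}\ {\rm rel}\ \partial)$ is not known to vanish, so this analogue requires a genuine argument. (Also, your normalization to the \emph{minimal} geometric intersection $(\max(\alpha,0),\max(-\alpha,0))$ is not in general achievable by isotopy; it suffices, and is easy, to stabilize both surfaces to a common $(p,q)$.)
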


\begin{proof}
We proceed similarly to the proof of Lemma~\ref{lem:wellknown}.
View $(\mathbb{CP}^{2})^{\circ}$ as a unit disk bundle over the core
$\mathbb{CP}^{1}$. For $r\in(0,1]$, denote $N_{r}\subset(\mathbb{CP}^{2})^{\circ}$
as the open disk subbundle over $\mathbb{CP}^{1}$ with radius $r$.

Assume that there exists a diffeomorphism rel $\partial$ $f:((\mathbb{CP}^{2})^{\circ},S)\to((\mathbb{CP}^{2})^{\circ},S')$.
After isotoping $f,S,S'$ if necessary, assume that $S\cap N_{1/4}$
and $S'\cap N_{1/4}$ are a finite number of fiber disks, and that
$f$ is the identity on $(\mathbb{CP}^{2})^{\circ}\backslash N_{1/3}$.
Consider a fiberwise smooth ambient isotopy rel $\partial$ $(\varphi_{t}):(\mathbb{CP}^{2})^{\circ}\times I\to(\mathbb{CP}^{2})^{\circ}$
that ``dilates'' $N_{1/4}$ to $N_{1/2}$. Then $(\varphi_{t})$
is a smooth ambient isotopy between $S$ and $\varphi_{1}(S)$, and
$(f\circ\varphi_{t}\circ f^{-1})$ is a smooth ambient isotopy between
$S'$ and $f\circ\varphi_{1}(S)$.

We claim that the cobordism maps $Kh_{\mathbb{CP}^{2}}(\varphi_{1}(S))$
and $Kh_{\mathbb{CP}^{2}}(f\circ\varphi_{1}(S))$ agree. Note that
the surfaces $\varphi_{1}(S)$ and $f\circ\varphi_{1}(S)$ agree on
$(\mathbb{CP}^{2})^{\circ}\backslash N_{1/3}$. Let $\varphi_{1}(S)$
intersect $\mathbb{CP}^{1}$ at $p$ points positively and $q$ points
negatively. Since $\varphi_{1}(S)\cap N_{1/3}$ is some number of
fiber disks, $Kh_{\mathbb{CP}^{2}}(\varphi_{1}(S))$ is induced by
the Khovanov map given by the cobordism
\[
((\mathbb{CP}^{2})^{\circ}\backslash N_{1/3},\varphi_{1}(S)\cap((\mathbb{CP}^{2})^{\circ}\backslash N_{1/3})):(S^{3},m(\partial S))\to(S^{3},T(p+q,p+q)_{p,q}).
\]

By modifying $f\circ\varphi_{1}(S)$ near $\mathbb{CP}^{1}$ if necessary,
assume that there exists an $\varepsilon>0$ such that $(f\circ\varphi_{1}(S))\cap N_{\varepsilon}$
is $(p+q+2\ell)$ many fiber disks. Note that $\ell\ge0$. Then, we
are left to show that 
\begin{equation}
(\overline{N_{1/3}}\backslash N_{\varepsilon},S\cap(\overline{N_{1/3}}\backslash N_{\varepsilon})):(S^{3},T(p+q,p+q)_{p,q})\to(S^{3},T(p+q+2\ell,p+q+2\ell)_{p+\ell,q+\ell})\label{eq:cobord}
\end{equation}
induces an isomorphism
\begin{equation}
Kh^{0,{\rm gr}_{q}(p,q)}(T(p+q,p+q)_{p,q})\to Kh^{0,{\rm gr}_{q}(p+\ell,q+\ell)}(T(p+q+2\ell,p+q+2\ell)_{p+\ell,q+\ell}).\label{eq:kiso}
\end{equation}

In fact, we claim that if $A$ is a surface in $[0,1]\times S^{3}$
with the same domain and codomain as (\ref{eq:cobord}) and is topologically
the disjoint union of $(p+q)$ many annuli with $2\ell$ disks removed,
then the induced cobordism map (\ref{eq:kiso}) is an isomorphism.
Note that the surface in (\ref{eq:cobord}) satisfies this condition.
To show the claim, we use cobordism maps on the Lee spectral sequence
\cite{MR2173845,rasmussen2005khovanovsinvariantclosedsurfaces,Ra}
over $\mathbb{Q}$. Let $\mathfrak{s}_{p,q}$ (resp.\ $\mathfrak{s}_{p+\ell,q+\ell}$)
be the Lee generator of $T(p+q,p+q)_{p,q}$ (resp.\ $T(p+q+2\ell,p+q+2\ell)_{p+\ell,q+\ell}$),
and let us also denote $\mathfrak{s}_{p,q}$ (resp.\ $\mathfrak{s}_{p+\ell,q+\ell}$)
as its image in the $E_{\infty}$ page of the Lee spectral sequence.
Then, for any such $A$, the induced cobordism map on the $E_{\infty}$
page of the Lee spectral sequence over $\mathbb{Q}$ maps $\mathfrak{s}_{p,q}$
to $\pm C\mathfrak{s}_{p+\ell,q+\ell}$, where $C$ only depends on
$p,q,\ell$. Since the domain and the codomain of (\ref{eq:k1iso})
are $\mathbb{Z}$ and these gradings are where the Lee generators
$\mathfrak{s}_{p,q}$ and $\mathfrak{s}_{p+\ell,q+\ell}$ (respectively)
live over $\mathbb{Q}$, the domain and codomain of (\ref{eq:k1iso})
survive to the $E_{\infty}$ page of the Lee spectral sequence over
$\mathbb{Q}$. Hence, any such $A$ and $A'$ induce the same map
(\ref{eq:kiso}) up to sign. To show that (\ref{eq:kiso}) is an isomorphism
over $\mathbb{Z}$, we only have to check it for one such $A$: one
choice is the composition of $\ell$ many of (\ref{eq:k1iso}).
\end{proof}

\subsection{\label{subsec:A-skein-lasagna}A skein lasagna argument}

In this subsection, we present a skein lasagna proof of Lemma~\ref{lem:tqft-diff}.
For simplicity, we work over $\mathbb{Q}$ in this subsection, unless
specified otherwise.

Ren and Willis \cite[Section 6.11]{2402.10452} give a description
of the cobordism map 
\[
Kh_{\mathbb{CP}^{2}}(S):Kh(m(\partial S))\to\mathbb{Q}
\]
for oriented surfaces $S\subset(\mathbb{CP}^{2})^{\circ}$, in terms
of the Khovanov-Rozansky $\mathfrak{gl}_{2}$ skein lasagna module.
Note that this latter description is phrased in terms of the $\mathfrak{gl}_{2}$
homology $KhR_{2}$ instead of $Kh$, which can be thought as a renormalization
of $Kh$ of the \emph{mirror} of the link \cite[Equation (8)]{2402.10452};
hence the cobordism map is defined for oriented surfaces in $D^{4}\#\overline{\mathbb{CP}^{2}}$.
For a quick introduction to the $\mathfrak{gl}_{2}$ skein lasagna
modules, we refer the readers to \cite[Sections 2.1-2.4]{2402.10452};
see the original paper \cite{MWW} for the general setting.

Let $S$ be an oriented surface in $D^{4}\#\overline{\mathbb{CP}^{2}}$.
Let us recall the skein lasagna description of the cobordism map 
\begin{equation}
KhR_{2,\overline{\mathbb{CP}^{2}}}(S):KhR_{2}(m(\partial S))\to\mathbb{Q}.\label{eq:khrcobord}
\end{equation}
Let $\alpha:=[S]\in H_{2}(D^{4}\#\overline{\mathbb{CP}^{2}})=H_{2}(\overline{\mathbb{CP}^{2}}).$
Ren and Willis show that ${\cal S}_{0,0,|\alpha|}^{2}(\overline{\mathbb{CP}^{2}};\alpha;\mathbb{Z})\cong\mathbb{Z}$,
and define the \emph{canonical dual lasagna generator} $\theta_{\alpha}\in({\cal S}_{0}^{2}(\overline{\mathbb{CP}^{2}};\alpha;\mathbb{Q}))_{0,-|\alpha|}^{\ast}$
up to sign, as the dual element that maps a generator of ${\cal S}_{0,0,|\alpha|}^{2}(\overline{\mathbb{CP}^{2}};\alpha;\mathbb{Z})\cong\mathbb{Z}$
to $1$. Consider the lasagna filling with no input balls given by
the surface $S$. This defines an element in ${\cal S}_{0}^{2}(D^{4}\#\overline{\mathbb{CP}^{2}};\partial S;\alpha)$;
denote it also as $S$ by abuse of notation. Then, the cobordism map
(\ref{eq:khrcobord}) is the image of $S\in{\cal S}_{0}^{2}(D^{4}\#\overline{\mathbb{CP}^{2}};\partial S;\alpha)$
under the following composition
\[
{\cal S}_{0}^{2}(D^{4}\#\overline{\mathbb{CP}^{2}};\partial S;\alpha)\xrightarrow[\cong]{\kappa}{\rm Hom}(KhR_{2}(m(\partial S)),\mathbb{Q})\otimes{\cal S}_{0}^{2}(\overline{\mathbb{CP}^{2}};\alpha)\xrightarrow{{\rm Id}\otimes\theta_{\alpha}}{\rm Hom}(KhR_{2}(m(\partial S)),\mathbb{Q}),
\]
where the first map $\kappa$ is the K\"{u}nneth map for the skein
lasagna module \cite[Theorem 1.4]{MR4445546}, and the second map
is given by evaluating with the canonical dual lasagna generator $\theta_{\alpha}\in({\cal S}_{0}^{2}(\overline{\mathbb{CP}^{2}};\alpha;\mathbb{Q}))_{0,-|\alpha|}^{\ast}$.
\begin{proof}[A skein lasagna proof of Lemma~\ref{lem:tqft-diff} over $\mathbb{Q}$]
We work with the $\mathfrak{gl}_{2}$ homology $KhR_{2}$ and oriented
surfaces $S,S'\subset D^{4}\#\overline{\mathbb{CP}^{2}}$ such that
$\alpha:=[S]=[S']\in H_{2}(D^{4}\#\overline{\mathbb{CP}^{2}})=H_{2}(\overline{\mathbb{CP}^{2}})$.
We show that if there exists a diffeomorphism rel $\partial$
\[
F:(D^{4}\#\overline{\mathbb{CP}^{2}},S)\to(D^{4}\#\overline{\mathbb{CP}^{2}},S'),
\]
then $KhR_{2,\overline{\mathbb{CP}^{2}}}(S')=\pm KhR_{2,\overline{\mathbb{CP}^{2}}}(S)$.

Without loss of generality, we may assume that $F$ fixes a collar
neighborhood of the boundary; assume that it fixes a neighborhood
of the $D^{4}$ summand. Extend $F$ by the identity to a diffeomorphism
$G:\overline{\mathbb{CP}^{2}}\to\overline{\mathbb{CP}^{2}}$. Since
$F$ is the identity on a neighborhood of the $D^{4}$ summand, one
can check by going through the proof of the K\"{u}nneth formula \cite[Theorem 1.4]{MR4445546}
that the following commutes, where $\mathcal{S}_{0}^{2}(F),\mathcal{S}_{0}^{2}(G)$
are the induced isomorphisms on the $\mathfrak{gl}_{2}$ skein lasagna
modules. 
\[\begin{tikzcd}
	{\mathcal{S}_{0}^{2}(D^{4}\#\overline{\mathbb{CP}^{2}};\partial S;\alpha)} & {{\rm Hom}(KhR_{2}(m(\partial S)),\mathbb{Q})\otimes\mathcal{S}_{0}^{2}(\overline{\mathbb{CP}^{2}};\alpha)} \\
	{\mathcal{S}_{0}^{2}(D^{4}\#\overline{\mathbb{CP}^{2}};\partial S';\alpha)} & {{\rm Hom}(KhR_{2}(m(\partial S')),\mathbb{Q})\otimes\mathcal{S}_{0}^{2}(\overline{\mathbb{CP}^{2}};\alpha)}
	\arrow["{\kappa }", from=1-1, to=1-2]
	\arrow["\cong"', from=1-1, to=1-2]
	\arrow["{\mathcal{S}_0 ^2 (F)}", from=1-1, to=2-1]
	\arrow["{{\rm{Id}}\otimes \mathcal{S}_0 ^2 (G)}", from=1-2, to=2-2]
	\arrow["{\kappa }", from=2-1, to=2-2]
	\arrow["\cong"', from=2-1, to=2-2]
\end{tikzcd}\]

Since ${\cal S}_{0}^{2}(G)$ is an isomorphism that preserves the
bigrading of ${\cal S}_{0}^{2}(\overline{\mathbb{CP}^{2}};\alpha)$
and ${\cal S}_{0,0,|\alpha|}^{2}(\overline{\mathbb{CP}^{2}};\alpha;\mathbb{Z})=\mathbb{Z}$,
${\cal S}_{0}^{2}(G)$ fixes the canonical dual lasagna generator
$\theta_{\alpha}\in({\cal S}_{0}^{2}(\overline{\mathbb{CP}^{2}};\alpha))_{0,-|\alpha|}^{\ast}$
up to sign. Hence, 
\begin{align*}
KhR_{2,\overline{\mathbb{CP}^{2}}}(F(S)) & =({\rm Id}\otimes\theta_{\alpha})\circ\kappa(F(S))\\
 & =({\rm Id}\otimes\theta_{\alpha})\circ\kappa\circ{\cal S}_{0}^{2}(F)(S)\\
 & =({\rm Id}\otimes(\theta_{\alpha}\circ{\cal S}_{0}^{2}(G)))\circ\kappa(S)\\
 & =\pm({\rm Id}\otimes\theta_{\alpha})\circ\kappa(S)\\
 & =\pm KhR_{2,\overline{\mathbb{CP}^{2}}}(S).
\end{align*}
\end{proof}

\section{\label{sec:Exotic-compact,-contractible}Exotic Mazur manifolds from
Khovanov homology}

In this section, we use the Khovanov map for oriented cobordisms in
$\mathbb{CP}^{2}$ from Section~\ref{sec:Khovanov-homology-and}
to show Theorem~\ref{thm:Khovanov-homology-can}. We thank Lisa Piccirillo
for suggesting this. Note that we mainly work with the mirrors $m(\Sigma_{k}),m(\Sigma_{k}')\subset D^{4}$
because of our orientation conventions (see Remark~\ref{rem:Our-orientation-conventions,}
and Theorem~\ref{thm:HS1}).
\begin{proof}[Proof of Theorem~\ref{thm:Khovanov-homology-can}]
Let $S_{k}$ (resp.\ $S_{k}'$) be the disk in $D^{4}\#\mathbb{CP}^{2}$
given by \emph{blowing up} the disk $m(\Sigma_{k})\subset D^{4}$
(resp.\ $m(\Sigma_{k}')$) at a point on the disk. In other words,
choose a small closed $4$-ball $B$ such that $(B,B\cap m(\Sigma_{k}))\cong(D^{4},D^{2})$.
In particular, $\partial(B\cap m(\Sigma_{k}))\subset\partial B\cong S^{3}$
is an unknot. Replace $(B,B\cap m(\Sigma_{k}))$ by $(\overline{N}(\mathbb{CP}^{1}),F)$,
where $\overline{N}(\mathbb{CP}^{1})$ is a closed tubular neighborhood
of the core $\mathbb{CP}^{1}$ in $\mathbb{CP}^{2}$, $F$ is a fiber
disk, and $\partial F\subset\partial\overline{N}(\mathbb{CP}^{1})\cong S^{3}$
is identified with $\partial(B\cap m(\Sigma_{k}))\subset\partial B$.
Let $S_{k}$ be the resulting properly embedded surface in $(\mathbb{CP}^{2})^{\circ}:=D^{4}\#\mathbb{CP}^{2}$.
Similarly, let $S_{k}'$ be the surface in $(\mathbb{CP}^{2})^{\circ}$
obtained from $m(\Sigma_{k}')$ analogously.

Then, by Remark~\ref{rem:blowdown}, the maps 
\[
Kh_{\mathbb{CP}^{2}}(S_{k}),Kh_{\mathbb{CP}^{2}}(S_{k}'):Kh(J_{k})\to\mathbb{Z}
\]
are the same as 
\[
Kh(m(\Sigma_{k})),Kh(m(\Sigma_{k}')):Kh(J_{k})\to\mathbb{Z},
\]
respectively.

Hence, $((\mathbb{CP}^{2})^{\circ},S_{k})$ and $((\mathbb{CP}^{2})^{\circ},S_{k}')$
are not diffeomorphic rel $\partial$ by Theorem~\ref{thm:HS1} and
Lemma~\ref{lem:tqft-diff}, and so $(\mathbb{CP}^{2})^{\circ}\backslash N(S_{k})$
and $(\mathbb{CP}^{2})^{\circ}\backslash N(S_{k}')$ are not diffeomorphic
rel $\partial$. Using the same method as the proof of Corollary~\ref{cor:For-,-the},
one can show that their boundary has trivial mapping class group for
all $k\ge1$. Note that their boundary is $S_{-1}^{3}(m(J_{k}))\cong-S_{1}^{3}(J_{k})$.

The $4$-manifold $(\mathbb{CP}^{2})^{\circ}\backslash N(S_{k})$
can be obtained from $D^{4}\backslash N(m(\Sigma_{k}))$ by attaching
a $1$-framed $2$-handle along a meridian of $m(\Sigma_{k})$, and
an analogous statement holds for $(\mathbb{CP}^{2})^{\circ}\backslash N(S_{k}')$.
Hence, the first diagrams of Figure~\ref{fig:handlecalculus} are
$-((\mathbb{CP}^{2})^{\circ}\backslash N(S_{k}))$ and $-((\mathbb{CP}^{2})^{\circ}\backslash N(S_{k}'))$,
and these manifolds are homeomorphic since $D^{4}\backslash N(\Sigma_{k})$
and $D^{4}\backslash N(\Sigma_{k}')$ are homeomorphic rel $\partial$
(Corollary~\ref{cor:For-,-the}). Finally, Figure~\ref{fig:handlecalculus}
checks that they can be simplified to the handle diagrams of Figure~\ref{fig:mazur}.
This proves Theorem~\ref{thm:Khovanov-homology-can}.
\end{proof}
\begin{figure}[p]
\centering{}\includegraphics[scale=0.5]{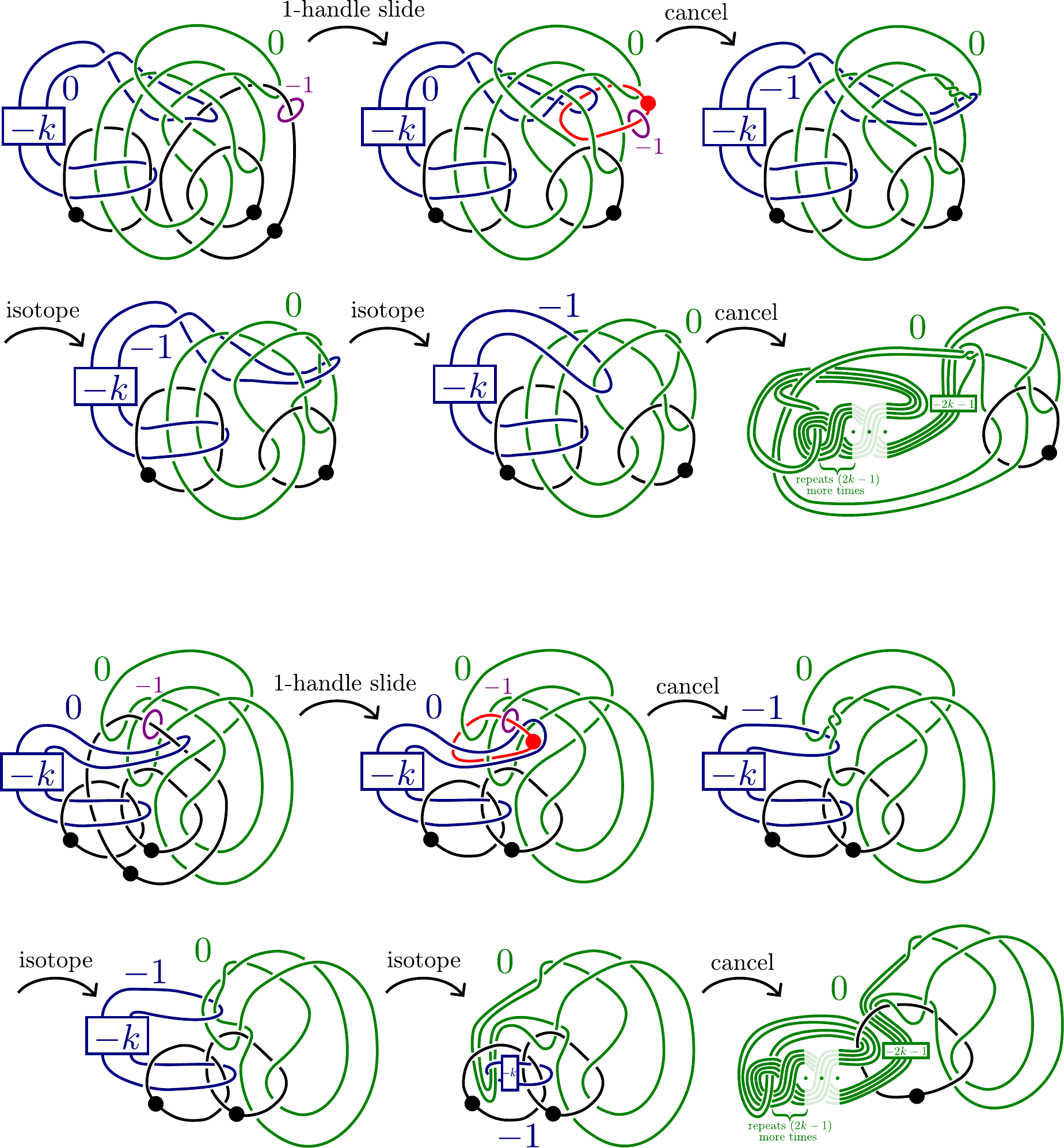}\caption{\label{fig:handlecalculus}Handle calculus that shows that the two
manifolds of Figure~\ref{fig:mazur} are diffeomorphic to the disk
exteriors $-((\mathbb{CP}^{2})^{\circ}\backslash N(S_{k}))$ and $-((\mathbb{CP}^{2})^{\circ}\backslash N(S_{k}'))$
from the proof of Theorem~\ref{thm:Khovanov-homology-can}. In the
final step, the handle diagrams for $k=1$ are shown.}
\end{figure}

\appendix

\section{\label{sec:Computation-of-the}Computation of the mapping class group}

In this appendix, we show that ${\rm MCG}(S_{0}^{3}(J_{k}))$ is trivial
for every integer $k\ge1$, similarly to \cite[Proposition A.3 (a)]{MR4726569},
by using SnapPy \cite{SnapPy} inside Sage \cite{sagemath}. The same
argument works for ${\rm MCG}(S_{1}^{3}(J_{k}))$. Consider the $2$-component
link $J_{0}\sqcup U$ of Figure~\ref{fig:twocomp}. Performing a
$0$-surgery along $J_{0}$ and a $1/k$-surgery along $U$ gives
$S_{0}^{3}(J_{k})$. By first inputting $J_{0}\sqcup U$ \cite[\code{j0u.lnk}]{ancillaryfiles}
as a \code{snappy.ManifoldHP} object called \code{L} and using the
following code, one can verify that $S_{0}^{3}(J_{0})\backslash U$
is hyperbolic and has trivial symmetry group.

\begin{lstlisting}[language=Python,basicstyle={\ttfamily},breaklines=true,showstringspaces=false,columns=fullflexible,keepspaces=true]
L.dehn_fill((0,1),0)
R = L.filled_triangulation().canonical_retriangulation(verified=True)
R.verify_hyperbolicity() # True
len(R.isomorphisms_to(R)) # 1
\end{lstlisting}
Now, Thurston's hyperbolic Dehn surgery theorem \cite{MR4554426}
implies that, for sufficiently large $k$, $S_{0}^{3}(J_{k})$ is
hyperbolic and that the core of the solid torus corresponding to $U$
is the unique shortest geodesic. Hence, any isometry of $S_{0}^{3}(J_{k})$
must fix this geodesic, and so restricts to a diffeomorphism of its
complement, $S_{0}^{3}(J_{0})\backslash U$. Thus we have shown that
${\rm MCG}(S_{0}^{3}(J_{k}))$ is trivial for sufficiently large $k$.
\begin{rem}
\label{rem:Thurston's-theorem-also}Thurston's theorem also says that
${\rm vol}(S_{0}^{3}(J_{k}))\to{\rm vol}(S_{0}^{3}(J_{0})\backslash U)$
as $k\to\infty$, and that if $S_{0}^{3}(J_{k})$ is hyperbolic, then
${\rm vol}(S_{0}^{3}(J_{k}))<{\rm vol}(S_{0}^{3}(J_{0})\backslash U)$.
Hence, there are infinitely many pairwise non-diffeomorphic $S_{0}^{3}(J_{k})$'s
for $k\ge1$. The same conclusion holds for $S_{1}^{3}(J_{k})$, since
one can check that $S_{1}^{3}(J_{0})\backslash U$ is hyperbolic.
\end{rem}

\begin{figure}[H]
\centering{}\includegraphics[scale=0.7]{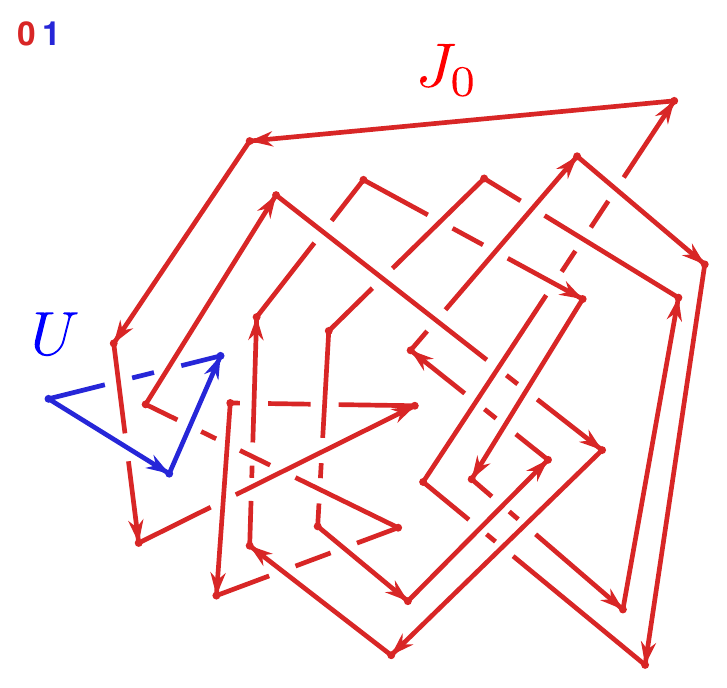}\caption{\label{fig:twocomp}\cite[Figure 14]{MR4726569} A $2$-component
link $J_{0}\sqcup U$ \cite[\code{j0u.lnk}]{ancillaryfiles}.}
\end{figure}

We use the effective bound given by \cite[Theorem 7.28]{MR4466646}
to handle all $k\ge1$. Using the following code, we can check that
$S_{0}^{3}(J_{0})\backslash U$ has no geodesics with length $\le0.1428$,
and hence it satisfies the condition of \cite[Theorem 7.28]{MR4466646},
by \cite[Lemma 7.26]{MR4466646}.
\begin{lstlisting}[language=Python,basicstyle={\ttfamily},breaklines=true,showstringspaces=false,columns=fullflexible,keepspaces=true]
R.length_spectrum_alt(max_len=1, verified=True, bits_prec=1000)
\end{lstlisting}
Now, \code{R.cusp\_areas(verified=True)} outputs $13.24\cdots$,
and hence we use the following code to find all the slopes on the
cusp with length at most $37.5$, which ensures a normalized length
of $>37.5/\sqrt{13.3}>10.1$.
\begin{lstlisting}[language=Python,basicstyle={\ttfamily},breaklines=true,showstringspaces=false,columns=fullflexible,keepspaces=true]
R.short_slopes(verified=True, length=37.5)
\end{lstlisting}
This leaves us to check $k=1,\cdots,17$ in the same way as we checked
$k=1$ in Section~\ref{sec:Exotic-compact,-orientable}. One can
use the following code:
\begin{lstlisting}[language=Python,basicstyle={\ttfamily},breaklines=true,showstringspaces=false,columns=fullflexible,keepspaces=true]
L.dehn_fill((0,1),0)
for k in range(1,18):
  L.dehn_fill((1,k),1)
  print(L.verify_hyperbolicity()[0])
  sp = L.length_spectrum_alt(max_len=1, verified=True, bits_prec=1000)
  print(sp)
  R = L.drill_word(sp[0].word, verified=True, bits_prec=1000).filled_triangulation().canonical_retriangulation(verified=True)
  print("k =",k,"; MCG =",len(R.isomorphisms_to(R)))
\end{lstlisting}

Since \code{verified=True}, if the above code runs successfully,
then SnapPy will have proved, using interval arithmetic, that $S_{0,1/k}^{3}(J_{0},U)$
is hyperbolic and will have provably computed the rank of its mapping
class group. However, this does not mean that $S_{0,1/k}^{3}(J_{0},U)$
being hyperbolic and it having a geodesic of length $\le1$ always
imply that the code will run successfully. Indeed, using the above
given link diagram of $J_{0}\sqcup U$ \cite[\code{j0u.lnk}]{ancillaryfiles},
SnapPy successfully proved that ${\rm MCG}(S_{0,1/k}^{3}(J_{0},U))$
is trivial for all $k\ge1$ and that ${\rm MCG}(S_{1,1/k}^{3}(J_{0},U))$
is trivial for all $k\ge2$, but failed to prove it for $S_{1,1}^{3}(J_{0},U)$.
For this case, working directly with the knot diagram of $J_{1}$
\cite[\code{j1.lnk}]{ancillaryfiles} as in Section~\ref{sec:Exotic-compact,-orientable}
worked for us.

\bibliographystyle{amsalpha}
\bibliography{/Users/gheehyun/Documents/writings/bib}

\providecommand{\bysame}{\leavevmode\hbox to3em{\hrulefill}\thinspace}
\providecommand{\MR}{\relax\ifhmode\unskip\space\fi MR }
\providecommand{\MRhref}[2]{%
  \href{http://www.ams.org/mathscinet-getitem?mr=#1}{#2}
}
\providecommand{\href}[2]{#2}
\begin{thebibliography}{MWW22}

\bibitem[AR16]{MR3471934}
Selman Akbulut and Daniel Ruberman, \emph{Absolutely exotic compact
  4-manifolds}, Comment. Math. Helv. \textbf{91} (2016), no.~1, 1--19.
  \MR{3471934}

\bibitem[AY19]{MR4053349}
Selman Akbulut and Eylem~Zeliha Yildiz, \emph{Knot concordances in {$S^1\times
  S^2$} and exotic smooth 4-manifolds}, J. G\"okova Geom. Topol. GGT
  \textbf{13} (2019), 41--52. \MR{4053349}

\bibitem[CDGW]{SnapPy}
Marc Culler, Nathan~M. Dunfield, Matthias Goerner, and Jeffrey~R. Weeks,
  \emph{Snap{P}y, a computer program for studying the geometry and topology of
  $3$-manifolds}, Available at \url{http://snappy.computop.org} (07/05/2025).

\bibitem[CP21]{CP}
Anthony Conway and Mark Powell, \emph{Characterisation of homotopy ribbon
  discs}, Advances in Mathematics \textbf{391} (2021), 107960.

\bibitem[FPS22]{MR4466646}
David Futer, Jessica~S. Purcell, and Saul Schleimer, \emph{Effective
  bilipschitz bounds on drilling and filling}, Geom. Topol. \textbf{26} (2022),
  no.~3, 1077--1188. \MR{4466646}

\bibitem[GS99]{MR1707327}
Robert~E. Gompf and Andr\'as~I. Stipsicz, \emph{{$4$}-manifolds and {K}irby
  calculus}, Graduate Studies in Mathematics, vol.~20, American Mathematical
  Society, Providence, RI, 1999. \MR{1707327}

\bibitem[ha]{334386}
Ian~Agol (https://mathoverflow.net/users/1345/ian agol), \emph{Can an exotic
  diffeomorphism of the 4-ball change the isotopy class of an embedded
  surface?}, MathOverflow, URL:https://mathoverflow.net/q/334386 (version:
  2019-06-19).

\bibitem[HMP21]{MR4317407}
Kyle Hayden, Thomas~E. Mark, and Lisa Piccirillo, \emph{Exotic {M}azur
  manifolds and knot trace invariants}, Adv. Math. \textbf{391} (2021), Paper
  No. 107994, 30. \MR{4317407}

\bibitem[HS24]{MR4726569}
Kyle Hayden and Isaac Sundberg, \emph{Khovanov homology and exotic surfaces in
  the 4-ball}, J. Reine Angew. Math. \textbf{809} (2024), 217--246.
  \MR{4726569}

\bibitem[Jac04]{Ja}
Magnus Jacobsson, \emph{An invariant of link cobordisms from {K}hovanov
  homology}, Algebr. Geom. Topol. \textbf{4} (2004), 1211--1251. \MR{2113903}

\bibitem[Kho00]{MR1740682}
Mikhail Khovanov, \emph{A categorification of the {J}ones polynomial}, Duke
  Math. J. \textbf{101} (2000), no.~3, 359--426. \MR{1740682}

\bibitem[Lee05]{MR2173845}
Eun~Soo Lee, \emph{An endomorphism of the {K}hovanov invariant}, Adv. Math.
  \textbf{197} (2005), no.~2, 554--586. \MR{2173845}

\bibitem[LS22]{MR4504654}
Robert Lipshitz and Sucharit Sarkar, \emph{A mixed invariant of nonorientable
  surfaces in equivariant {K}hovanov homology}, Trans. Amer. Math. Soc.
  \textbf{375} (2022), no.~12, 8807--8849. \MR{4504654}

\bibitem[MN22]{MR4445546}
Ciprian Manolescu and Ikshu Neithalath, \emph{Skein lasagna modules for
  2-handlebodies}, J. Reine Angew. Math. \textbf{788} (2022), 37--76.
  \MR{4445546}

\bibitem[MWW22]{MWW}
Scott Morrison, Kevin Walker, and Paul Wedrich, \emph{Invariants of 4-manifolds
  from {K}hovanov-{R}ozansky link homology}, Geom. Topol. \textbf{26} (2022),
  no.~8, 3367--3420. \MR{4562565}

\bibitem[Nah25]{ancillaryfiles}
Gheehyun Nahm, \emph{Plink files included as ancillary files of this manuscript
  on ar{X}iv}, 2025, \url{https://arxiv.org/src/2510.10809v3/anc}.

\bibitem[Ras05]{rasmussen2005khovanovsinvariantclosedsurfaces}
Jacob Rasmussen, \emph{Khovanov's invariant for closed surfaces}, 2005.

\bibitem[Ras10]{Ra}
Jacob Rasmussen, \emph{{K}hovanov homology and the slice genus}, Invent. Math.
  \textbf{182} (2010), no.~2, 419--447. \MR{2729272}

\bibitem[Ren24]{MR4843752}
Qiuyu Ren, \emph{Lee filtration structure of torus links}, Geom. Topol.
  \textbf{28} (2024), no.~8, 3935--3960. \MR{4843752}

\bibitem[RW24]{2402.10452}
Qiuyu Ren and Michael Willis, \emph{{K}hovanov homology and exotic
  $4$-manifolds}, 2024.

\bibitem[{The}25]{sagemath}
{The Sage Developers}, \emph{{S}agemath, the {S}age {M}athematics {S}oftware
  {S}ystem ({V}ersion 10.6.beta5)}, 2025, {\tt https://www.sagemath.org}.

\bibitem[Thu22]{MR4554426}
William~P. Thurston, \emph{The geometry and topology of three-manifolds. {V}ol.
  {IV}}, American Mathematical Society, Providence, RI, [2022] \copyright 2022,
  Edited and with a preface by Steven P. Kerckhoff and a chapter by J. W.
  Milnor. \MR{4554426}

\end{thebibliography}

\end{document}